\begin{document}
\def\theequation{\thesection.\arabic{equation}}
\makeatletter
\@addtoreset{equation}{section}
\makeatother
\def\thefigure{\thesection.\arabic{figure}}

\newenvironment{proof}[1][Proof]{\textbf{#1.} }{\hfill \rule{0.5em}{0.5em}}

\newtheorem{remark}{Remark}[section]

\renewcommand{\titre}{Stochastic chemical kinetics with energy parameters}
\renewcommand{\smalltitre}{Stochastic chemical kinetics}
\renewcommand{\auts}{Guy Fayolle, Vadim Malyshev, Serguei Pirogov}
\tit

\begin{abstract}
We introduce new models of energy redistribution in stochastic
chemical kinetics with several molecule types and energy
parameters. The main results concern the situations when there are
product form measures. Using a probabilistic interpretation of the
related Boltzmann equation, we find some invariant measures  
explicitly and we prove convergence to them.
\end{abstract}

\index{Chemical Kinetics}
\index{Boltzmann equation}
\index{Markov chain}
\index{Product form}

\section{Introduction}
Metabolic pathways in molecular biology are chains or networks of
chemical reactions providing redistribution of energy, in particular
synthesis of ATP molecules, universal energy stocks in cells. Here we
elaborate simple models of energy redistribution. According to a
classical approximation, the energy of a molecule can be subdivided in
two parts: internal (chemical) energy and kinetic energy. The model is
the following.

\noindent
Assume that there are $V$ molecule types $v\in
\left\{1,\ldots,V\right\} $, $n_{v}(t)$ molecules for each type $v$ at
time $t$. Types $v$ can be interpreted as chemical substances with
different formulas, different isomers of the same formula, or even as
different energy levels (spectrum) of the same molecule.

\noindent 
The total number of molecules $M=\sum_{v}n_{v}(t)$ will be conserved.
A molecule may be characterized by a pair $(v,T),v=1,\ldots,V$, where
$T\in R_{+}$ is the kinetic energy of the molecule. Then each molecule
of type $v$ at time $t$ has energy
\begin{equation*}
E(t)=I(v)+T(t),
\end{equation*}
where $I(v)$ is the internal (or chemical) energy of any molecule of
type $v$, $T(t)$ being the kinetic energy of a concrete molecule at time
$t$. Thus, for any $v,t$, $I(v)$ are fixed numbers and $T(t)$ are random.

\noindent
We use the approach usually refered to as \emph{stochastic chemical
kinetics}. It appeared in physical papers, see \cite{Leo}, but was
also explored also by mathematicians for many models with small $V$,
(see e.g. the reviews \cite{McQua, Kal}). However these models did not
consider any energy parameter. Independently of this, Kac \cite{Kac}
considered a beautiful model with mean field collisions. Deeper
results in this model appear even recently, see \cite{CaCaLo}.
However, in Kac's model molecules were characterized only by kinetic
energies, that is $V=1$. Our model can be considered as a mixture of
these two: there are molecule types and energy parameter.

\medskip\noindent 
The plan of the paper is as follows. In section 2, we introduce our
probabilistic microscopic model and provide the corresponding Boltzmann type
equation. Proof of the finite microtime scaling limit convergence to this
equation uses standard technical tools and will be published elsewhere. In
section 3 we get deeper results for the one type case with uniform
scattering: find invariant measures and prove convergence of the Boltzmann
equation for large macrotime. In section 4 we provide many examples, with
similar results for multitype models.

\section{Finite time scaling limit}
Unless otherwise stated, we consider a system of binary reactions of
the form $A+B\rightarrow C+D$. We assume energy conservation and
random momentary collisions, that is when a pair of different
molecules $(v,T),(v^{\prime },T^{\prime })$ collide at time $t$ then a
new pair $(v_{1},U),(v_{1}^{ \prime },U^{\prime })$ appears at time
$t+0$, so that
\begin{equation*}
I(v)+T+I(v^{\prime })+T^{\prime }=I(v_{1})+U+I(v_{1}^{\prime
})+U^{\prime}.
\end{equation*}
Obviously, the reaction is possible only if 
\begin{equation}
I(v)+T+I(v^{\prime })+T^{\prime }\geq I(v_{1})+I(v_{1}^{\prime }).
\label{compa_1}
\end{equation}
We define the following continuous time Markov chain. The state is an
array of $V$ vectors $((v,T_{i}),i=1,\ldots,n_{v}),v\in V$. Thus,
their total length $M=\sum_{v=1}^{V}n_{v}$ is conserved, but not
necessarily $n_{v}$. The order of components in each vector
$((v,T_{i}),i=1,\ldots,n_{v})$ does not play any role, so that we will
consider only functions symmetric in the vector coordinates.

\noindent
On the time interval $(t,t+dt)$, each pair of molecules
$(v,T),(v^{\prime },T^{\prime})$ has a collision with probability
$\frac{1}{M}\alpha _{vv^{\prime }}(T,T^{\prime })dt$. The functions
$\alpha _{vv^{\prime }}(x,y)$ are assumed to be bounded and smooth on
$R_{+}^{2}$. As a result of this collision, some pair
$(v_{1},U),(v_{1}^{\prime },U^{\prime })$ appears, provided that
condition (\ref{compa_1}) holds for at least one pair
$(v_{1},v_{1}^{\prime })$. Otherwise nothing occurs. The distribution
of the new pair is defined by the rules listed hereafter. For any
$v_{1},v_{1}^{\prime},v,v^{\prime },T,T^{\prime }$, the conditional
densities
\begin{equation*}
P\bigl((v_{1},U),v_{1}^{\prime }|(v,T),(v^{\prime },T^{\prime
})\bigr)\geq 0
\end{equation*} are supposed to satisfy the following properties.
\begin{enumerate}
\item If 
\begin{equation*}
I(v)+T+I(v^{\prime })+T^{\prime }<I(v_{1})+I(v_{1}^{\prime}),
\end{equation*}
then 
\begin{equation*} 
P\bigl((v_{1},U),v_{1}^{\prime
}|(v,T),(v^{\prime },T^{\prime })\bigr)=0.
\end{equation*}
\item For any $v,v^{\prime},v_{1},v_{1}^{\prime},T,T^{\prime}$, the
density function 
\[f(U)=P\bigl((v_{1},U),v_{1}^{\prime}|(v,T),(v^{\prime},T^{\prime})\bigr)
\]
is defined on the interval
$I=\left[0,I_{v}+T+I_{v^{\prime}}+T^{\prime}-I(v_{1})-I(v_{1}^{\prime})\right]
$ and
\[
\sum_{v_{1},v_{1}^{\prime}}
\int_{I}P((v_{1},U),v_{1}^{\prime}|(v,T),(v^{\prime },T^{\prime}))dU=1.
\]
\end{enumerate}
Thus the distribution of the triple $(v_{1},U,v_{1}^{\prime})$ is
entirely defined by
\[P\bigl((v_{1},U),v_{1}^{\prime}|(v,T),(v^{\prime},T^{\prime })\bigr) 
\]
and
$U^{\prime}=I(v)+T+I(v^{\prime})+T^{\prime}-(I(v_{1})+U+I(v_{1}^{\prime})).$ 

\medskip\noindent
Hence, for $V$ finite sets
$\left\{T_{v,1},\ldots,T_{v,n_{v}}\right\},v=1,\ldots,V$, we have
defined a Markov process on $R_{+}^M$, which will be denoted by
$\mathcal{L}_{M}$. It is worth remarking that, when the total energy $U$
is fixed, $\mathcal{L}_{M}$ has a compact state space. Then, under
some nondegeneracy conditions on $\alpha $ and $P$, this Markov chain
for fixed $M$ approaches, as $t\rightarrow \infty $, its unique
stationary distribution $\pi ^{(M)}(U)$. Our goal will be to study, under
some conditions, the scaling limit $M\rightarrow \infty $ for fixed
$t$, and also the large time limit $t\rightarrow \infty $.

\medskip\noindent
Let $n_{v}^{(M)}(A,t)$ denote the number of type $v$ molecules at
time $t$ having kinetic energy $T$ in the set $A\subset R_{+}$. In the
limit $ M\rightarrow \infty $ we have to impose initial conditions at
time zero
\begin{equation*}
\lim_{M\rightarrow \infty }\frac{n_{v}^{(M)}(A,0)}{M}=\int_{A}\rho
_{v}(x,0)dx ,
\end{equation*} 
for some nonnegative functions $\rho _{v}(x,0),
\sum_{v}\int_{R_{+}}\rho _{v}(x,0)dx=1$, called \emph{concentrations}.
Our goal is to prove that, as $M\rightarrow \infty $, the sequence of
Markov processes $\mathcal{L}_{M}$ converges to some deterministic
evolution $\mathcal{L}$ of the concentrations. We state now our first result.
\begin{theo} \label{TH1}
For any $A$ and $t$, there exist deterministic limits (in probability) 
\begin{equation*}
\lim_{M\rightarrow \infty }\frac{n_{v}^{(M)}(A,t)}{M}=\int_{A}\rho
_{v}(x,t)dx ,
\end{equation*} 
where the $\rho _{v}(x,t)$'s are some non-negative functions
satisfying the following Boltzmann type equations
\begin{equation}\label{bolt_1}
\begin{split}
\frac{\partial\rho_{v_{1}}(x,t)}{\partial t} &= \sum_{v,v^{\prime},v_{1}^{\prime}}\int_{R_{+}^{2}}\bigl[\alpha_{vv^{\prime}}(y,z)
P\bigl((v_{1},x),v_{1}^{\prime}|(v,y),(v^{\prime},z)\bigr)\rho _{v}(y,t)\rho _{v^{\prime}}(z,t) \\ 
& \qquad -\alpha_{v_{1}v_{1}^{\prime}}(x,z)P((v,y),v^{\prime}|(v_{1},x),(v_{1}^{\prime},z))
\rho_{v_{1}}(x,t)\rho _{v_{1}^{\prime}}(z,t)\bigr] dydz ,\\
\end{split}
\end{equation}
with the initial condition $\rho _{v}(x,0)$.
\end{theo}

\paragraph{Other reaction types}
Quite similarly one can consider other types of reactions. For example
consider the reaction $A\rightarrow B+C$. In this case on the time interval $ 
(t,t+dt)$ each molecule $(v,T)$ with probability $\alpha _{v}(T)dt$ is
transformed into two molecules (note that the scaling is different here). The
distribution of the products $(v_{1},U),(v_{1}^{\prime },U^{\prime })$ is
defined by similar kernels $P((v_{1},U),v_{1}^{\prime }|(v,T))$ under the
condition 
\begin{equation*}
I_{v_{1}}+I_{v_{1}^{\prime}}\leq I_{v}+T.
\end{equation*}

\section{One type case}
\subsection{Probabilistic interpretation}
We consider in this section the particular situation with only one
molecule type $v$. It will be also assumed that the rates $\alpha
(T,T^{\prime })=\alpha _{vv}(T,T^{\prime })=\alpha $ and the
conditional probabilities $P(U|T,T^{\prime })$ are uniform on the
interval $\left[ 0,T+T^{\prime }\right] $. It turns out that the
limiting stationary distribution can be found explicitly. Indeed,
equation (\ref{bolt_1}) can be rewritten as
\begin{equation}
\frac{\partial \rho (x,t)}{\partial t}=\alpha \int_{x}^{\infty
}\frac{ds}{s} \int_{0}^{s}\rho(u,t)\rho (s-u,t)du-\alpha \rho (x,t).
\label{bolt_11}
\end{equation} 
[Similar equations appeared in \cite{Ern} in a different context]. Now
one can guess a fixed point: it is $\rho (x)=\beta e^{-\beta x}$, but
it also can be obtained from a very clear probabilistic picture.

\medskip\noindent
Let us consider \emph{finite particle dynamics}, that is the chain
$\mathcal{L}_{M}$, the states of which are finite subsets of $R_{+}$
with $M$ elements.

\noindent
Take first the case $M=2$. Define the chains $\mathcal{L}_{2}(U)$ as
the restriction of $\mathcal{L}_{2}$ on states with total energy $U$.
Then the chains $\mathcal{L}_{2}(U)$ are irreducible and nilpotent:
that is, already after the first jump we get the stationary
distribution $\pi_{2}(U)$, with $T$ uniformly distributed on $[0,U]$
and $T^{\prime }=U-T$. Hence, for any initial condition,
$\mathcal{L}_{2}$ is a mixture of $\mathcal{L}_{2}(U)$. We see that,
for any density $f(U)$, the measure
\begin{equation*}
\int_{R_{+}}\pi _{2}(U)f(U)dU
\end{equation*} 
is an invariant measure for $\mathcal{L}_{2}$. Indeed one of these
invariant measures is of greatest interest to us. Let the random
vector $(\xi_{1},\xi _{2})$ on $R_{+}^{2}$ be defined by the measure
$\mu_{2,\beta}$, such that the two random variables $\xi _{1},\xi
_{2}$ on $R_{+}$ be i.i.d with density $\rho (x)=\beta \exp (-\beta
x)$. Consider a new random vector $(\eta _{1},\eta _{2})$, where
$\eta _{1}$ is picked at random on the interval $[0,\xi _{1}+\xi
_{2}]$ and $\eta _{2}=\xi _{1}+\xi _{2}-\eta _{1}$. This defines a
transformation of measures $\mu _{2,\beta }^{\prime }=W\mu
_{2,\beta}$. In fact we have the following

\begin{lem}
The measure $\mu _{2,\beta }$ is invariant with respect to $W$, that is 
\begin{equation}
\mu _{2,\beta }^{\prime }=\mu _{2,\beta } \label{inv_1}.
\end{equation}
\end{lem}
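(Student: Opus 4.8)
The claim is that if $\xi_1,\xi_2$ are i.i.d. exponential with rate $\beta$, and we form $\eta_1 \sim \text{Unif}[0, \xi_1+\xi_2]$, $\eta_2 = \xi_1+\xi_2-\eta_1$, then $(\eta_1,\eta_2) \stackrel{d}{=} (\xi_1,\xi_2)$.

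Let me think about this. The key computation: I need the joint density of $(\eta_1, \eta_2)$.

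Let $S = \xi_1 + \xi_2$. For two i.i.d. Exp($\beta$), $S$ has Gamma(2,$\beta$) density $\beta^2 s e^{-\beta s}$.

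Given $S = s$, $\eta_1$ is uniform on $[0,s]$, so the conditional density of $\eta_1$ is $1/s$ on $[0,s]$.

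The joint density of $(\eta_1, \eta_2)$: since $\eta_2 = s - \eta_1$, we have $S = \eta_1 + \eta_2$. So let me compute the density at point $(a, b)$ with $a, b \geq 0$.

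The density of $(\eta_1, S)$: $f(s) \cdot (1/s) = \beta^2 s e^{-\beta s} \cdot (1/s) = \beta^2 e^{-\beta s}$ for $0 \le \eta_1 \le s$.

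Transform to $(\eta_1, \eta_2)$ where $\eta_2 = s - \eta_1$, i.e. $s = \eta_1 + \eta_2$. The Jacobian is 1. So density of $(\eta_1, \eta_2)$ at $(a,b)$ is $\beta^2 e^{-\beta(a+b)} = \beta e^{-\beta a} \cdot \beta e^{-\beta b}$.

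That's exactly the product of two Exp($\beta$) densities! So indeed $(\eta_1,\eta_2) \sim \mu_{2,\beta}$.

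The computation is genuinely this clean. The "main obstacle" is essentially nil — it's a direct density computation. Let me present the plan.

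=== PROOF PROPOSAL ===

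The plan is to compute the joint density of $(\eta _{1},\eta _{2})$ directly and verify that it coincides with the product of two exponential densities. The whole argument hinges on the elementary observation that a Gamma$(2,\beta )$ sum, when split uniformly, returns two independent exponentials; the factor $s$ coming from the Gamma density cancels exactly against the $1/s$ normalization of the uniform split.

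First I would record the law of the auxiliary variable $S=\xi _{1}+\xi _{2}$. Since $\xi _{1},\xi _{2}$ are i.i.d.\ with density $\beta e^{-\beta x}$, the sum $S$ has the Gamma$(2,\beta )$ density $\beta ^{2}s\,e^{-\beta s}$ on $R_{+}$. Next I would write the joint law of the pair $(\eta _{1},S)$: by construction $\eta _{1}$ is uniform on $[0,S]$, so conditionally on $S=s$ its density is $s^{-1}\mathbf{1}_{[0,s]}(\eta _{1})$, and hence the joint density of $(\eta _{1},S)$ at a point $(a,s)$ with $0\le a\le s$ equals
\begin{equation*}
\beta ^{2}s\,e^{-\beta s}\cdot \frac{1}{s}=\beta ^{2}e^{-\beta s}.
\end{equation*}

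The final step is the change of variables $(\eta _{1},S)\mapsto (\eta _{1},\eta _{2})$ with $\eta _{2}=S-\eta _{1}$, equivalently $s=a+b$ where $a=\eta _{1},b=\eta _{2}$. This map has Jacobian $1$, and the region $0\le a\le s$ becomes the quadrant $a,b\ge 0$. Substituting $s=a+b$ into the density above gives the joint density of $(\eta _{1},\eta _{2})$ at $(a,b)$ as
\begin{equation*}
\beta ^{2}e^{-\beta (a+b)}=\bigl(\beta e^{-\beta a}\bigr)\bigl(\beta e^{-\beta b}\bigr),
\end{equation*}
which is exactly the density of $\mu _{2,\beta }$. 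Hence $\mu _{2,\beta }^{\prime }=W\mu _{2,\beta }=\mu _{2,\beta }$, as claimed. I do not expect any real obstacle here: the computation is short and the only point requiring a moment's care is tracking the cancellation of the $s$ factors and confirming that the support transforms correctly under the linear change of variables.
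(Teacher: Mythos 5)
Your computation is correct and follows exactly the route the paper takes, only spelled out in full: the paper's proof is the one-line observation that $\xi_{1}+\xi_{2}$ has density $\beta^{2}x e^{-\beta x}$ and that a uniform split of this sum returns $\mu_{2,\beta}$, which is precisely the Gamma--uniform cancellation you make explicit. No gap and no difference in approach.
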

\begin{proof} Immediate, since the density of $\xi =\xi _{1}+\xi _{2}$ 
is $\beta ^{2}x\exp (-\beta x) $. Then picking a random point on the
interval $\left[0,x\right] $ yields $\mu _{2,\beta}$, whence equality
(\ref{inv_1}) follows.
\end{proof}
In addition, (\ref{inv_1})  gives
\begin{equation*}
\rho (u)=\int_u^{\infty}\frac{dx}{x}\int_{0}^{x}\rho (y)\rho (x-y)dy,
\end{equation*} 
which is exactly the stationary form of equation (\ref{bolt_11}).

\noindent
For $M\geq 3$, the Markov chain $\mathcal{L}_{M}$ has also irreducible
components $\mathcal{L}_{M}(U)$, consisting of all states
$(T_{1},\ldots,T_{M})$ with $T_{1}+\cdots+T_{M}=U$. For fixed $M$ and
$U$ the invariant measure of the chain $\mathcal{L}_{M}(U)$ is the
uniform measure on the simplex $T_{1}+\cdots+T_{M}=U$. An invariant
measure on $\mathcal{L}_{M}$ can be found as follows. 

\noindent
Take $M$ independent particles, having each density $\beta e^{-\beta
x}$ on $R_{+}$ and let $\mu _{M,\beta }$ denote their joint
distribution.
\begin{lem}
The measure $\mu _{M,\beta}$ is invariant for $\mathcal{L}_{M}$.
\end{lem}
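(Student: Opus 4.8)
The plan is to generalize the $M=2$ argument to arbitrary $M$ by exploiting the mean-field structure of the dynamics. The key observation is that the continuous-time chain $\mathcal{L}_M$ proceeds by selecting a random pair of particles and applying the collision rule, which for the one-type uniform case redistributes the pair's total energy $T_i+T_j$ uniformly. Since each pairwise move acts only on the two chosen coordinates while leaving the others fixed, it suffices to prove that the pairwise transition operator preserves $\mu_{M,\beta}$; the full generator, being a convex combination over pairs of these pairwise operators, will then annihilate $\mu_{M,\beta}$ as well.

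First I would reduce the claim to a single generic pair, say coordinates $1$ and $2$. Under $\mu_{M,\beta}$ the coordinates $T_1,\ldots,T_M$ are i.i.d.\ exponential, so in particular $(T_1,T_2)$ is distributed as $\mu_{2,\beta}$ and is independent of $(T_3,\ldots,T_M)$. The pairwise move replaces $(T_1,T_2)$ by $(\eta_1,\eta_2)$ with $\eta_1$ uniform on $[0,T_1+T_2]$ and $\eta_2 = T_1+T_2-\eta_1$, exactly the transformation $W$ of the preceding Lemma, and does not touch the remaining coordinates. By the invariance $\mu_{2,\beta}' = \mu_{2,\beta}$ already established, the pair $(\eta_1,\eta_2)$ is again distributed as $\mu_{2,\beta}$, and by construction it remains independent of $(T_3,\ldots,T_M)$. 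Hence the joint law after the move is still the product of $M$ i.i.d.\ exponentials, i.e.\ $\mu_{M,\beta}$ is invariant under this single pairwise operator. By symmetry the same holds for every pair $(i,j)$.

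Finally I would assemble these facts at the level of the generator. The generator of $\mathcal{L}_M$ has the form $L = \frac{1}{M}\sum_{i<j} \alpha\,(W_{ij}-\mathrm{Id})$, where $W_{ij}$ is the pairwise redistribution operator acting on coordinates $i,j$. Since each $W_{ij}$ preserves $\mu_{M,\beta}$, each summand $(W_{ij}-\mathrm{Id})$ has $\mu_{M,\beta}$ in the kernel of its adjoint, and therefore so does $L$; this is precisely the statement that $\mu_{M,\beta}$ is stationary. The main obstacle, and the one point deserving genuine care, is the independence bookkeeping: one must verify that the uniform redistribution of $\xi_1+\xi_2$ over the pair, conditioned on the untouched coordinates, does not create spurious correlations between the updated pair and the rest. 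This follows cleanly from the memorylessness of the exponential — the sum $\xi_1+\xi_2$ has the Gamma density $\beta^2 x e^{-\beta x}$ and splitting it uniformly reconstructs two independent exponentials, exactly as in the previous Lemma — so the argument reduces to invoking that result coordinatewise rather than to any new computation.
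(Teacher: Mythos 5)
Your proof is correct and follows the same route as the paper: the paper's own argument is exactly that the generator of $\mathcal{L}_{M}$ decomposes as a sum of pairwise generators, each of which preserves $\mu_{M,\beta}$ by the previous lemma. You merely spell out the independence bookkeeping that the paper leaves implicit, which is a fine thing to do.
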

\begin{proof} It follows from the previous lemma, because the generator of
$\mathcal{L}_{M}$ is the sum of generators corresponding to all pairs
$ (i,j),i,j=1,\ldots,M,i\neq j$.
\end{proof}
\begin{remark}
  One can show that $\mathcal{L}_{M}(U)$ is reversible, by using the
  classical Kolmogorov's reversibility criteria for Markov with
  transition rates $\lambda _{\alpha \beta }$, namely
\begin{equation*}
\lambda_{\alpha_{1}\alpha_{2}}\lambda_{\alpha_{2}\alpha_{3}}\ldots
\lambda_{\alpha_{k}\alpha_{1}}=\lambda_{\alpha _{1}\alpha_{k}} 
\lambda_{\alpha_{k} \alpha _{k-1}}\ldots\lambda _{\alpha_{2}\alpha_{1}}.
\end{equation*} 
See related questions in \cite{Whi}.
\end{remark}

\subsection{Convergence for Boltzmann equation}
According to the above section,  when the total initial energy $U$ satisfies the condition
$U=M/\beta$, we have
\begin{equation*}
\lim_{M\rightarrow\infty}\lim_{t\rightarrow
\infty}\frac{n_{v}^{(M)}(A,t) }{M}=\int_{A}\beta e^{-\beta x}dx.
\end{equation*} 

We will consider now the quantity $\lim_{t\rightarrow \infty
}\lim_{M\rightarrow\infty}$.

\begin{theo}
For Boltzmann equation (\ref{bolt_1}), for any initial condition
$\rho(x,0)$, we have
\begin{equation}
\lim_{t\rightarrow \infty }\rho (x,t)=\beta e^{-\beta x},\quad x\geq 0
\label{boltz_12}
\end{equation}
\end{theo}
\begin{proof} The sketch is the following. First, we prove in the
 next subsection, under more general assumptions, that any initial
distribution converges to some fixed point. Secondly, we will show
that there is a unique one-dimensional manifold of fixed points, namely
$\beta e^{-\beta x},0<\beta<\infty$. This will conclude the proof,
since $\beta$ itself is uniquely determined by the initial mean
energy
\begin{equation*}
T(0)=\lim_{M\rightarrow \infty
}\frac{1}{M}\sum_{i=1}^{M}T_{i}(0) = \frac{1}{\beta}.
\end{equation*}
\end{proof}

\subsection{Local equilibrium condition}
We come back here to an arbitrary number of types. We will say that a
positive function $f(v,x)$ on $V\times R_{+}$ with $\sum_{v}\int
f(v,x)dx=C<\infty $, satisfies a \emph{local equilibrium condition}
(LE) if, for any $\gamma,\gamma _{1}$,
\begin{equation}
\sum_{\gamma ^{\prime },\gamma _{1}^{\prime }}\bigl[w(\gamma ,\gamma _{1}|\gamma
^{\prime },\gamma _{1}^{\prime })f(\gamma ^{\prime })f(\gamma _{1}^{\prime
})-w(\gamma ^{\prime },\gamma _{1}^{\prime }|\gamma ,\gamma _{1})f(\gamma
)f(\gamma _{1})\bigr]=0,  \label{LE}
\end{equation}
where we use the notation
\begin{equation*}
\gamma =(v,x), \quad \sum_{\gamma }=\sum_{v}\int dx,
\end{equation*}
and 
\begin{multline*}
w(\gamma ,\gamma _{1}|\gamma ^{\prime },\gamma _{1}^{\prime })= \\
\alpha _{v^{\prime }v_{1}^{\prime }}(x^{\prime },x_{1}^{\prime
})P\bigl((v,x),v_{1}|(v^{\prime },x^{\prime }),(v_{1}^{\prime
},x_{1}^{\prime })\bigr)\delta (x_{1}-(x^{\prime }+x_{1}^{\prime
}+I_{v^{\prime }}+I_{v_{1}^{\prime }}-x-I_{v}-I_{v_{1}})).
\end{multline*}
One can assume $C=1$. Then, in the one type case, this is tantamount
to saying that $\mathcal{L}_{2}$ has the invariant product form
distribution $f(x)f(y)$.

\medskip\noindent
The \emph{fixed point condition} (FP) 
\begin{equation}
\sum_{\gamma _{1},\gamma ^{\prime },\gamma _{1}^{\prime }}\bigl[w(\gamma ,\gamma
_{1}|\gamma ^{\prime },\gamma _{1}^{\prime })f(\gamma ^{\prime })f(\gamma
_{1}^{\prime })-w(\gamma ^{\prime },\gamma _{1}^{\prime }|\gamma ,\gamma
_{1})f(\gamma )f(\gamma _{1})\bigr]=0, \label{FP}
\end{equation}
valid for any $\gamma$, follows immediately from (\ref{LE}).

\noindent
We shall say that $f(\gamma )$ satisfies a \emph{detailed balance condition}
(DB) whenever
\begin{equation}
w(\gamma ,\gamma _{1}|\gamma ^{\prime },\gamma _{1}^{\prime })f(\gamma
^{\prime })f(\gamma _{1}^{\prime })-w(\gamma ^{\prime },\gamma _{1}^{\prime
}|\gamma ,\gamma _{1})f(\gamma )f(\gamma _{1})=0,  \label{DB}
\end{equation}
for any $\gamma ,\gamma ^{\prime },\gamma _{1},\gamma _{1}^{\prime }$. In
the above one type example, DB condition holds if one chooses
\begin{equation*}
f_{0}=\beta e^{-\beta x},
\end{equation*}
for any positive $\beta$. Note that DB$\rightarrow $LE$\rightarrow $FP.

\noindent
Let us define the \emph{relative entropy} of $f$ with respect to $f_{0}$,
assuming both $f$ and $f_{0}$ are positive. Farther on, $f_{0}$ will
be fixed and therefore omitted in the notation, so that
\begin{equation}
H(f)\equiv H(f,f_{0})=\sum_{\gamma }f(\gamma )\log \left[\frac{f_{0}(\gamma
)}{f(\gamma)}\right].
\label{ent_1}
\end{equation}

\begin{theo}
Assume that there exists some $f_{0}(\gamma )>0$ satisfying the local
equilibrium condition. Then for any initial $f(\gamma ,0)$ with $H(f(.,0))$
finite, the function $f(\gamma )=f(\gamma ,t)$, that is the solution of
equation (\ref{bolt_1}), does satisfy
\begin{equation*}
\frac{dH(f)}{dt}\geq 0.
\end{equation*} 
Moreover, as $t\rightarrow \infty $, $f(\gamma ,t)$ tends to some
fixed point $f_{\infty }$ which depends in general on the initial data
$f(\gamma,0)$. LE condition holds for any stationary solution $f$,
that is for any fixed point of (\ref{bolt_1}).
\end{theo}
\begin{proof} The integrability of $\frac{df(\gamma )}{dt}$ 
follows from (\ref {bolt_1}), so that the following conservation law
holds
\begin{equation}
\sum_{\gamma }\frac{df(\gamma )}{dt}=0  \label{ent_2}.
\end{equation} 
Differentiating (\ref{ent_1}) and using (\ref{ent_2}), we get
\begin{equation*}
\frac{dH(f)}{dt}=\sum_{\gamma }\frac{df(\gamma )}{dt}\log
\left[\frac{f_{0}(\gamma)}{f(\gamma )}\right].
\end{equation*} 
We rewrite  condition (\ref{LE}) as 
\begin{equation*}
\sum_{\gamma ^{\prime },\gamma _{1}^{\prime }}w(\gamma ,\gamma
_{1}|\gamma ^{\prime },\gamma _{1}^{\prime })\frac{f_{0}(\gamma
^{\prime })f_{0}(\gamma _{1}^{\prime })}{f_{0}(\gamma
)f_{0}(\gamma_{1})}=\sum_{\gamma ^{\prime },\gamma _{1}^{\prime
}}w(\gamma^{\prime},\gamma _{1}^{\prime }|\gamma,\gamma _{1}),
\end{equation*} 
and set for the sake of shortness
$\int\equiv\sum_{\gamma,\gamma_{1},\gamma ^{\prime},\gamma
_{1}^{\prime }}$. Then, for any function $f(\gamma)$, we have
\begin{equation*}
\int w(\gamma ,\gamma _{1}|\gamma ^{\prime },\gamma _{1}^{\prime
})\frac{f_{0}(\gamma ^{\prime })f_{0}(\gamma _{1}^{\prime
})}{f_{0}(\gamma)f_{0}(\gamma _{1})}f(\gamma )f(\gamma _{1})=\int
w(\gamma ^{\prime},\gamma_{1}^{\prime }|\gamma ,\gamma _{1})f(\gamma
)f(\gamma _{1}),
\end{equation*} 
or, after a change of variables,
\begin{equation*}
\int w(\gamma ^{\prime },\gamma _{1}^{\prime }|\gamma ,\gamma _{1})f(\gamma
)f(\gamma _{1})=\int w(\gamma ^{\prime },\gamma _{1}^{\prime }|\gamma
,\gamma _{1})\frac{f_{0}(\gamma )f_{0}(\gamma _{1})}{f_{0}(\gamma ^{\prime
})f_{0}(\gamma _{1}^{\prime })}f(\gamma ^{\prime })f(\gamma _{1}^{\prime }).
\end{equation*} 
Let  $\phi(\gamma)= \log \left[\frac{f_{0}(\gamma)}{f(\gamma)}\right]$. Then 
\begin{equation*}
\begin{split}
\frac{dH(f)}{dt} &=\sum_{\gamma}\frac{df(\gamma)}{dt}\phi(\gamma) \\
&=\int \phi(\gamma)\bigl[w(\gamma ,\gamma _{1}|\gamma ^{\prime
},\gamma_{1}^{\prime })f(\gamma ^{\prime })f(\gamma
_{1}^{\prime})-w(\gamma ^{\prime },\gamma _{1}^{\prime
}|\gamma,\gamma_{1})f(\gamma )f(\gamma_{1})\bigr] \\
&=\int\big[\phi(\gamma)-\phi(\gamma^{\prime})\bigr]
w(\gamma,\gamma_{1}|\gamma^{\prime},\gamma_{1}^{\prime})
f(\gamma^{\prime})f(\gamma_{1}^{\prime}) \\
&=\frac{1}{2}\int\bigl[\phi(\gamma)+\phi(\gamma_{1})-
\phi(\gamma^{\prime})-\phi(\gamma_{1}^{\prime})\bigr]w(\gamma ,\gamma
_{1}|\gamma^{\prime },\gamma _{1}^{\prime })f(\gamma ^{\prime
})f(\gamma _{1}^{\prime }). \\
\end{split}
\end{equation*}
Set for a while
\begin{equation*}
\begin{cases}
\displaystyle \xi =
\frac{f_{0}(\gamma)f_{0}(\gamma_{1})f(\gamma^{\prime})f(\gamma_{1}^{\prime})}
{f(\gamma)f(\gamma_{1})f_{0}(\gamma^{\prime})f_{0}(\gamma_{1}^{\prime})},
\\[0.5cm] \displaystyle \alpha =
\frac{f_{0}(\gamma^{\prime})f_{0}(\gamma_{1}^{\prime})}
{f_{0}(\gamma)f_{0}(\gamma_{1})},
\end{cases}
\end{equation*}
so that
\[\log \xi = 
\phi(\gamma)+\phi(\gamma_{1})-\phi(\gamma^{\prime})-\phi(\gamma_{1}^{\prime}).
\]
Then
\begin{equation*}
\frac{dH(f)}{dt}=\frac{1}{2}\int \alpha \xi \log\xi
w(\gamma,\gamma_{1}|\gamma^{\prime},\gamma_{1}^{\prime})f(\gamma)
f(\gamma_{1}).
\end{equation*}
On the other hand, from the LE condition, 
\begin{equation*}
\int \alpha \xi w(\gamma ,\gamma _{1}|\gamma ^{\prime },\gamma
_{1}^{\prime })f(\gamma )f(\gamma _{1})=\int \alpha w(\gamma ,\gamma
_{1}|\gamma^{\prime},\gamma_{1}^{\prime})f(\gamma )f(\gamma_{1}),
\end{equation*}
which yields
\begin{equation*}
\frac{dH(f)}{dt}=\frac{1}{2}\int (\xi \log \xi -\xi +1)\alpha
w(\gamma,\gamma_{1}|\gamma^{\prime },\gamma _{1}^{\prime
})f(\gamma)f(\gamma_{1})\geq 0,
\end{equation*} 
since $\xi \log \xi -\xi +1>0$ if $\xi >0$, due to the convexity of
$\xi\log\xi$.

\medskip\noindent
Assume now that for some $f_{0}>0$ the local equilibrium condition
holds. Then it holds also for any other stationary solution $f$, i.e.
satisfying $\frac{df}{dt}=0$. In fact, note that $\frac{dH(f)}{dt}>0$
if $f(\gamma )f(\gamma _{1})>0,w(\gamma ,\gamma _{1}|\gamma ^{\prime
},\gamma _{1}^{\prime })>0$ and $\xi \neq 1$. Also, if $f$ is a
stationary solution of equation (\ref {bolt_1}) then $\frac{df(\gamma
)}{dt}=0$ and hence $\frac{dH(f)}{dt}=0$. It follows that, for any
$\gamma ,\gamma _{1},\gamma ^{\prime },\gamma_{1}^{\prime }$ such that
$f(\gamma )f(\gamma _{1})>0$ and
$w(\gamma,\gamma_{1}|\gamma^{\prime},\gamma_{1}^{\prime })>0$, we have
$\xi=1$, that is
\begin{equation*}
\frac{f(\gamma ^{\prime })f(\gamma _{1}^{\prime })}{f(\gamma )f(\gamma _{1})} 
=\frac{f_{0}(\gamma ^{\prime })f_{0}(\gamma _{1}^{\prime })}{f_{0}(\gamma
)f_{0}(\gamma _{1})} \,.
\end{equation*} 
On the other hand, if $\frac{df(\gamma )}{dt}=0,f(\gamma )=0$ and
$w(\gamma,\gamma_{1}|\gamma ^{\prime },\gamma _{1}^{\prime })=0$, then
we get $f(\gamma^{\prime })f(\gamma _{1}^{\prime })=0$ as a
consequence of equation (\ref {bolt_1}). Thus, for any $\gamma ,\gamma
_{1}$, equation (\ref{LE}) holds. Any solution $f(t)$ of equation
(\ref{bolt_1}) as $t\rightarrow \infty $ tends to some stationary
solution $f_{\infty }$, which depends in general on the initial data
$f(0)$. In fact, from the proof of theorem \ref{TH1}, it follows that
$f$ is a stationary solution, i.e. $\frac{df}{dt}=0$, if and only if
$\frac{dH(f)}{dt}=0$ (provided that (\ref{bolt_1}) holds). This means
that $H(f)$ is a Lyapounov function. Consequently, the expected result
follows from the general theory of Lyapounov functions and the proof
of the theorem is terminated.
\end{proof}

\subsection{Fixed points and conservation laws}
Now we will prove that, for any two fixed points $f_{0},f$, the function $\log 
\frac{f}{f_{0}}$ is an additive conservation law. Consider the equation 
\begin{equation}
\frac{f(\gamma ^{\prime })f(\gamma _{1}^{\prime })}{f(\gamma )f(\gamma
_{1})} =
\frac{f_{0}(\gamma^{\prime})f_{0}(\gamma_{1}^{\prime})}{f_{0}(\gamma)f_{0}
(\gamma_{1})}
\label{cons_1}.
\end{equation} 
For $f_{0}=1$, we have 
\begin{equation}
\frac{f(\gamma^{\prime })f(\gamma_{1}^{\prime })}{f(\gamma )f(\gamma_{1})} 
= 1, \label{cons_2}
\end{equation} 
which shows that $\log f$ is an additive conservation law. Vice versa,
if there is a set $J$ of additive conservation laws such that
\begin{equation*}
\eta _{j}(\gamma )+\eta _{j}(\gamma _{1})=\eta _{j}(\gamma ^{\prime })+\eta
_{j}(\gamma _{1}^{\prime }),j\in J,
\end{equation*}
then, for any constants $c,c_{j}$, 
\begin{equation*}
f(\gamma )=c\prod_{j\in J}\exp (c_{j}\eta _{j}(\gamma ))
\end{equation*}
is a solution of (\ref{cons_2}). Note that additive conservation laws form a
linear space. Thus we have proved that any solution of (\ref{cons_2}) has
this form. In the general case (that is if $f_{0}\neq 1$), we have  
\begin{equation*}
\frac{f}{f_{0}}=c\prod_{j\in J}\exp (c_{j}\eta _{j}(\gamma )).
\end{equation*}
It is worth noticing that a nonzero additive conservation law for the
chain $\mathcal{L}_{M}$ is in fact unique, if the chains
$\mathcal{L}_{M}(U)$ are irreducible, for all $U$.

\section{Invariant measures for multitype models}
Here we will analyze some cases with $V>1$, when there exists an
invariant measure having a product form.

\subsection{Binary reactions without type change}
Let for any $v=1,\ldots,V$ a density $\rho _{v}(x)>0$ on $R_{+}$ be
given.  Assume only reactions $v,w\rightarrow v,w$\ are possible, so
that the $n_{v}$'s are conserved. Then one can introduce finite
particle Markov chains $\mathcal{L} _{n_{1},\ldots,n_{V}}$. Suppose in
addition that, for any couple of types $(v,w)$,
\begin{equation*}
\alpha _{vw}(T,T^{\prime })=\alpha _{vw}(T+T^{\prime }),
\end{equation*} 
which means that the rates depend only on the sum of energies.

\noindent
We need the following definition. Fix a pair $(v,w)$ of types and let
$\xi_{v},\xi _{w}$ be independent random variables with joint density
$\rho_{v}(x)\rho _{w}(y)$. Denote
\begin{equation*}
P_{\rho _{v}\rho _{w}}=P_{\rho _{v}\rho _{w}}(x,y|T)=P(\xi _{v}=x,\xi
_{w}=y|\xi _{v}+\xi _{w}=T)
\end{equation*} 
the corresponding conditional distributions, which will be called
\emph{canonical kernels} corresponding to the density array $(\rho
_{1},\ldots,\rho_{V})$.

\noindent
Let $\xi_{v,i},i=1,\ldots,n_{v},$ stand for the energy of the $i$-th
particle of type $v$.
\begin{theo}
  Fix an array $\rho _{1},\ldots,\rho _{V}$ and let a system of
  $\frac{V(V+1)}{2}$ reactions with canonical kernels $P_{\rho
    _{v}\rho _{w}}$ be given. Then, for any $n_{1},\ldots,n_{V}$, the
  invariant measures of $\mathcal{L}_{n_{1},\ldots,n_{V}}$ are such
  that the random variables $\xi _{v,i}$ have independent
  distributions equal to $\rho_{v}$. In the thermodynamic limit, for
  any initial concentrations of types $(c_{1},\ldots,c_{V})$ (here the
  concentrations of types do not change at all), the invariant energy
  distribution is unique and given by the independent densities $\rho
  _{v}$. Moreover, for any initial energy distribution, there is
  convergence to this invariant measure.

\noindent
Also, for any array $(\rho _{1},\ldots,\rho _{V})$ with arbitrary
rates $\alpha _{vw}(U)$, there is only one system of kernels for
which this array defines an invariant (product form) distribution,
these kernels being canonical kernels.
\end{theo}
\begin{proof} Any transition $v,v^{\prime }\rightarrow v,v^{\prime }$ 
  conserves $U$ and the related measures. Hence, as for the
  convergence, the argument is similar to that in the previous
  section.  The other statements follow directly from the definitions.
\end{proof}

\medskip\noindent
When $\rho _{v}(x)=\beta e^{-\beta x}$, the kernels are uniform on
$\left[ 0,T\right] $, as in the one type case study. Let $P^{\beta }$
denote such a kernel. An interesting situation depicted in the next
remark arises when
\begin{equation*}
\rho _{v}(x)=\beta _{v}\exp (-\beta _{v}x),
\end{equation*} 
with different $\beta _{v}$'s.
\begin{remark}
 All other cases can be reduced to the simplest one by the following  transformation. Given any density $\rho >0$ and any $\beta >0$, introduce the one to one mapping \thinspace $U=U(\rho ,\beta):R_{+}\rightarrow R_{+}$ such that, for any $x\in R_{+}$,
\begin{equation*}
\int_{0}^{x}\rho (y)dy=\int_{0}^{U(x)}\beta e^{-\beta y}dy.
\end{equation*} 
Then 
\begin{equation*}
P_{vw}= \bigl(U^{-1}(\rho _{v},\beta) ,U^{-1}(\rho_{w},\beta)\bigr)P^{\beta}
\bigl(U(\rho _{v},\beta) ,U( \rho_{w},\beta)\bigr).
\end{equation*}
\end{remark}

\subsection{Unary reactions}\label{ENERGY}
Now we want to tackle examples in which the $n_{v}$'s are not
conserved. Then, in general, only
\begin{equation*}
\mathcal{L}^{M}=\bigcup _{n_{1}+\cdots+n_{V}=M}\mathcal{L}_{n_{1},\ldots,n_{V}}
\end{equation*} 
is a Markov chain. In this subsection, we assume that unary reactions 
\begin{equation*}
v\rightarrow w
\end{equation*} 
can take place with rates $a_{vw}$. Such reactions could be interpreted
as isomer to isomer transformations. When $I_{v}\geq I_{w}$ the
reaction $ v\rightarrow w$ always occurs, and the kinetic energy $T$ of
the $v$-particle becomes the kinetic energy $I_{v}-I_{w}+T$ of the $w$-particle.
The reaction $ w\rightarrow v$ however occurs only if
$T-I_{v}+I_{w}\geq 0$, in which case the kinetic energy $T$ of
the $w$-particle becomes the kinetic energy $ T-I_{v}+I_{w}$ of the
$v$-particle.

\noindent 
Consider first the case without binary reactions. Define the following
\emph{one-particle} Markov chain: its states are all pairs $(v,T)$, that is
$M=1$. Moreover, assume that there are only two types. Let
$I_{1}<I_{2}$. Consider a pair of densities $\rho_1,\rho_2$, and
denote by $\xi_1,\xi_2$ the corresponding random variables. We call
this pair \emph{admissible} if the conditional density of $\xi_{1}-(I_{2}-I_{1})$, on the event $\{\xi _{1}>I_{2}-I_{1}\}$, is
equal to $\rho_{2}$. One example is $\rho_{1}=\rho_{2}=\beta\exp(-\beta x)$,
another being 
\begin{equation*}
\rho_{1}(x) = 
\begin{cases}
0, & \mathrm{for} \ x<I_{2}-I_{1}; \\[0.2cm] \rho _{2}(x-I_{2}+I_{1}),
& \mathrm{otherwise}.
\end{cases}
\end{equation*}
Any invariant measure on $\left\{ 1,2\right\} \times R_{+}$ can be
written as $\pi_{1}(1,\rho_{1})+\pi_{2}(2,\rho_{2})$ with positive
coeeficients $ \pi_{i}$ such that $\pi_{1}+\pi_{2}=1$. We have
 for $\pi_{1},\pi _{2}$ the following equations
\begin{equation*}
\pi_{1}Y_{1}a_{12}=\pi_{2}a_{21}, \quad
Y_{1}=\int_{I_{2}-I_{1}}^{\infty }\rho_{1}(x)dx .
\end{equation*}
This case exhibits the highest degree of reducibility, each class
containing one or two elements: there are plenty of invariant measures
-- but this is clearly a very unnatural situation. For an arbitrary
$M$ with only two types, we have the product of $M$ chains
$\mathcal{L}^{1}$, which again leads to a rather unnatural situation.

\noindent 
When there are $V>2$ types, each class also has a finite number of
elements. It is then possible to order the internal energies, assuming
for example
\begin{equation*}
I_{1}\leq I_{2}\leq\ldots\leq I_{V},
\end{equation*} 
and also $a_{vw}>0, \forall v,w$. If the full energy satisfies
$I_{m}<U<I_{m+1},m=1,\ldots,V$ (putting $I_{m+1}=\infty $) then there
are no possible jumps to the types $m+1,\ldots,V$, so that the process
evolves as a Markov chain $\mathcal{L}_{1,m}$ with state space
$1,\ldots,m$ and rates $a_{vw},v,w=1,\ldots,m$. Hence
$\mathcal{L}_{1,m}$ are restrictions of $\mathcal{L}_{1,V}$. For
$m=1$, it becomes a trivial one-point Markov chain. Let $\pi
_{m,v},v=1,\ldots,m$ denote the stationary probability of the state
$v$ in $\mathcal{L}_{1,m}$. We have $\pi _{1,1}=1$.

\noindent 
Note that, if at time $0$ the state is $(1,U)$ and $U$ has some
density $f(U)$ in $[I_{m},I_{m+1}]$, then the stationary distribution
is defined by $\pi_{m,v}$ and by the conditional density $f$ of the
full energy. Thus everything is defined by the rates $a_{vw}$ and by
$f(U)$, that is $\rho _{1}$. Moreover, these quantities can be chosen
arbitrarily. Setting for the sake of shortness
\begin{equation*}
\pi _{v}=\pi _{V,v},
\end{equation*}
we propose hereafter some examples.

\paragraph{\emph{Shifts}}
In this first example we take $\rho _{1}(x)=0$ if $x<I_{V}-I_{1}$.
Then each $\rho_{v}$ is just a shift of $\rho_{1}$.

\paragraph{\emph{Reversibility}}
Analogously, a system $(\rho _{1},\ldots,\rho _{V})$ of densities will
be said \emph{admissible} if the following condition holds: for any
$v$ the pair $(\rho _{v},\rho _{v+1})$ is admissible. Then it follows
that each pair of densities $(\rho_{i},\rho_{j}),i<j$, is admissible.

\begin{theo}
  If $I_{1}<\ldots<I_{V}$, all $\rho _{v}(x)$ are strictly positive
  and the system $(\rho _{1},\ldots,\rho _{V})$ is admissible then
  $\mathcal{L}^{M}$ is reversible.
\end{theo}
\begin{proof} Let $f_{v}(U)=\rho _{v}(U-I_{v})$ for $U\geq I_{v}$ 
  and $f_{v}(U)=0$ for $U<I_{v}$. We suppose the invariant
  distribution for the chain $\mathcal{L}^{M}$ has a product form,
  each factor being given by $\pi _{v}f_{v}(U)$. This means that for
  each $m$ and for $I_{m}\leq U <I_{m+1}$
\begin{equation*}
\sum_{i=1}^{m}\pi_{i}f_{i}(U)a_{ij}=\pi_{j}f_{j}(U)\sum_{i=1}^{m}a_{ji},
\end{equation*} 
for $j=1,\ldots,m$. Then admissibility means
\[
f_{i}(U) = 
\begin{cases}
A_{i}f_{1}(U), \ \mathrm{for} \ U\geq I_{i}, \\[0.2cm]
f_{i}(U) = 0, \ \mathrm{otherwise}. 
\end{cases}
\]
Hence 
\begin{equation*}
\sum_{i=1}^{m}\pi _{i}A_{i}a_{ij}=\pi _{j}A_{j}\sum_{i=1}^{m}a_{ji}, \quad 
1\leq j\leq m .
\end{equation*}
Putting $p_{i}=\pi _{i}A_{i}$, it follows that
\begin{equation*}
\sum_{i=1}^{m}p_{i}a_{ij}=p_{j}\sum_{i=1}^{m}a_{ji},\ 1\leq j\leq m,
\quad \forall m=1,\ldots,V.
\end{equation*}
The comparison of these equations for $m$ and $m+1$ yields
\begin{equation*}
p_{m+1}a_{m+1.j}=p_{j}a_{j,m+1}, \ \forall j=1,\ldots,m 
\end{equation*} 
and by induction we get
\begin{equation*}
p_{i}a_{ij}=p_{j}a_{ji}, \ \forall i,j ,
\end{equation*}
 which  implies the announced reversibility of $\mathcal{L}^{M}$.
 \end{proof}

\paragraph{\emph{Exponential}} In this third example, we also assume the 
system $(\rho_{1},\ldots,\rho_{V})$ of densities is admissible, and
moreover that, for some $\rho (T)$ and all $v$,
\begin{equation*}
\rho _{v}=\rho.
\end{equation*}
\begin{theo}
  Suppose $V\geq 3$, and that the quantities $I_{2}-I_{1}$ and
  $I_{3}-I_{2}$ are incommensurable. Then
\begin{equation*}
\rho (T)=\beta \exp (-\beta T),
\end{equation*} 
for some $\beta >0$.
\end{theo}
\begin{proof}
Admissibility implies that 
\begin{eqnarray*}
\rho_{2}(T) &=& A_{2}\rho _{1}(T+I_{2}-I_{1}),\\
\rho_{3}(T) &=& A_{3}\rho _{1}(T+I_{3}-I_{1}).
\end{eqnarray*} 
If $\rho _{1}=\rho _{2}=\rho _{3}=\rho$ then 
\begin{equation*}
\rho(T) = A_{2}\rho (T+x_{2}) = A_{3}\rho (T+x_{3}),
\end{equation*}
where $x_{i}=I_{i}-I_{1},\ i=2,3$ are incommensurable. But these last
two equations are compatible only if $A_{2}^{x_{3}}=A_{3}^{x_{2}}$ and
$\rho (T)=\beta \exp (-\beta T)$, with 
\begin{equation*}
\beta =\frac{\log A_{2}-\log A_{3}}{x_{2}-x_{3}}.
\end{equation*}
\end{proof}

\paragraph{\emph{Energy dependence}} 
In the fourth example, the rates $a_{vw}=a_{vw}(T)$ depend on the
energy of the input particle $v$. To construct a model which will be
needed later, consider a reversible Markov chain $\mathcal{V}_{1}$ on
$\left\{ 1,\ldots,V\right\} $ with stationary probabilities $p_{v}$
and rates $b_{vw}$. Thus
\begin{equation*}
p_{v}b_{vw}=p_{w}b_{wv}.
\end{equation*}
For reactions $v\rightarrow w$, define the reaction rates as
\begin{equation*}
a_{vw}(U)= 
\begin{cases}
0, & \mathrm{if} \ U<I_{w}, \\[0.2cm]
(U-I_{w})^{\alpha _{w}}b_{vw}, & \ \mathrm{otherwise}.
\end{cases}
\end{equation*}
 Note that these rates are close to zero if the kinetic energy
$T_{w}=U-I_{w}$ of the $w$-particle is close to zero. Letting 
$f_{v}(U)$ be the density of the full energy of the $v$-particle, the
reversibility condition writes
\begin{equation} \label{rev_5}
\pi_{v}f_{v}(U)a_{vw}(U)=\pi_{w}f_{w}(U)a_{wv}(U), 
\end{equation}
for $U > \max (I_{v},I_{w})$. We take as  density $f$ the
shifted $\Gamma $-distribution
\begin{equation} \label{gamma_1}
f_{v}(U) = 
\begin{cases}
  \displaystyle \frac{\beta ^{\nu _{v}}}{\Gamma
    (\nu_{v})}(U-I_{v})^{\nu_{v}-1}\exp
  [-\beta (U-I_{v})], \ \mathrm{if} \  U >I_{v} , \\[0.3cm]
  0, \mathrm{otherwise}.
\end{cases}
\end{equation}
Here $\nu_{v}=\alpha_{v}+1$. Then equation (\ref{rev_5}) becomes 
\begin{equation*}
\frac{\pi_{v}\beta^{\nu_{v}}}{\Gamma (\nu_{v})}e^{\beta I_{v}}b_{vw}= 
\frac{\pi_{w}\beta^{\nu_{w}}}{\Gamma (\nu_{w})}e^{\beta I_{w}}b_{wv} \,,
\end{equation*}
 showing  that the stationary probabilities $\pi _{v}$ of type $v$ are
equal to (up to a common factor)
\begin{equation} \label{diez}
p_{v}e^{-\beta I_{v}}\Gamma (\nu _{v})\beta ^{-\nu _{v}},
\end{equation} 
and the resulting Markov chain is reversible.

\subsection{Binary reactions without energy dependence}
Let us suppose that $a_{vw}$ do not depend on energies, so that types
evolve independently of the energies. Thus at any time $t$, we will
have probabilities $p_{t}(n_{1},\ldots,n_{V})$. We will look for cases
when there exists an invariant measure on each $\mathcal{L}^{M}$,
defined by probabilities $p(n_{1},\ldots,n_{V})$, and independent
conditional distribution of energies
\begin{equation*}
\prod\limits_{v,i}\rho _{v,i},
\end{equation*} 
[given $n_{1},\ldots,n_{V}$], defined by densities
$\rho_{v.i}(x)=\rho_{v}(x)$.

\medskip\noindent 
Assume all $I_{v}$'s are equal, but any reaction $v,w\rightarrow
v^{\prime },w^{\prime }$ can occur and let now a reaction
$v,w\rightarrow v^{\prime },w^{\prime }$ be given. We again call
\begin{equation*}
P_{\rho _{v^{\prime }}\rho _{w^{\prime }}}(x,y|T)
\end{equation*} 
 the canonical kernel corresponding to the reaction $v,w\rightarrow
v^{\prime },w^{\prime }$ and we denote by $\rho _{vw}(T)$ the density
of $\xi _{v}+\xi _{w}$.

\begin{theo}
  Suppose an array $(\rho _{1},\ldots,\rho _{V})$ of
  densities is given, satisfying for any binary reaction
  $v,w\rightarrow v^{\prime },w^{\prime }$ the conditions
\begin{equation*}
\rho _{vw}(T)=\int_{x+y=T}\rho _{v}(x)\rho _{w}(y)dxdy=\int_{x+y=T}
\rho_{v^{\prime }}(x)\rho _{w^{\prime }}(y)dxdy = 
\rho_{v^{\prime }w^{\prime }}(T).
\end{equation*}
Assume also canonical kernels and that, as $t\rightarrow \infty$, the
limit of $p_{t}(n_{1},\ldots,n_{V})$ exists. Then there is an
invariant measure having these densities.
\end{theo}

\subsection{General binary reaction case}
Here the $I_{v}$'s can be different, but we assume only binary
reactions $v,w\rightarrow v^{\prime },w^{\prime }$ are possible.

\subsubsection{Complete factorization}\label{BINFAC}
Denote $\hat{\imath}$ a pair of types $(v,w)$. Thus reaction
$v,w\rightarrow v^{\prime },w^{\prime }$ will be written as
$\hat{\imath}\rightarrow \hat{\jmath}$, where
$\hat{\imath}=(v,w),\hat{\jmath}=(v^{\prime },w^{\prime })$. We shall
use the analog of the third example with binary reactions. Consider a
Markov chain $\mathcal{V}_{1}\times \mathcal{V}_{1}$ on
$\left\{1,\ldots,V\right\} \times \left\{ 1,\ldots,V\right\}$ with
rates $b_{\hat{\imath} \hat{\jmath}}$, such that its stationary
distribution be a product form $p_{(v,w)}=p_{v}p_{w}$ and the chain be
reversible. We define, for \emph{vector particles}
$\hat{\imath}=(v,w)$, the energies $I_{\hat{\imath}}=I_{v}+I_{w}$ and
\begin{equation*}
f_{\hat{\imath}}(U)=(f_{v}\ast f_{w})(U),
\end{equation*}
where $f_{v},f_{w}$ are given in (\ref{gamma_1}). Thus
$f_{\hat{\imath}}(U)$  has also a shifted $\Gamma $-distribution with
parameters $I_{\hat{\imath}}=I_{v}+I_{w},\,\nu _{\hat{\imath}}=\nu
_{v}+\nu _{w},\,\beta $. The reversibility condition, with some
unspecified stationary probabilities $\pi _{\hat{\imath}}$, writes
\begin{equation}
\pi_{\hat{\imath}}f_{\hat{\imath}}(U)a_{\hat{\imath}\hat{\jmath}}(U) = 
\pi_{\hat{\jmath}}f_{\hat{\jmath}}(U)a_{\hat{\jmath}\hat{\imath}}(U),
\label{rev_6}
\end{equation}
where $U > \max (I_{\hat{\imath}},I_{\hat{\jmath}})$. Letting
\[a_{\hat{\imath}\hat{\jmath}}(U)=
\begin{cases} 
0, & \mathrm{if} \  U<I_{\hat{\jmath}} \,; \\[0.2cm]
(U-I_{\hat{\jmath}})^{\alpha _{\hat{\jmath}
}}b_{\hat{\imath}\hat{\jmath}} & \  \mathrm{otherwise}.
\end{cases}
\]
 Here
\begin{equation*}
\alpha _{\hat{\jmath}}=\nu _{\hat{\jmath}}-1=\nu _{v^{\prime }}+\nu
_{w^{\prime }}-1,
\end{equation*}
and the reversibility condition becomes
\begin{equation*}
\pi_{\hat{\imath}}\frac{\beta ^{\nu_{\hat{\imath}}}}{\Gamma (\nu
_{\hat{\imath}})}e^{\beta
I_{\hat{\imath}}}b_{\hat{\imath}\hat{\jmath}}=\pi_{\hat{\jmath}}
\frac{\beta ^{\nu _{\hat{\jmath}}}}{\Gamma (\nu_{\hat{\jmath}})} e^{\beta
I_{\hat{\jmath}}}b_{\hat{\jmath}\hat{\imath}}.
\end{equation*}
We are looking for solutions $\pi _{(v,w)}=\pi _{v}\pi _{w}$, since we
are primarily interested in factorizable invariant distributions. To
this end, we assume in addition that, for any binary reaction
$v,w\rightarrow v^{\prime },w^{\prime }$, the condition
\begin{equation*}
\nu _{v}+\nu _{w}=\nu _{v^{\prime }}+\nu _{w^{\prime }}
\end{equation*}
is fulfilled. Then 
\begin{equation*}
\pi _{v}\pi _{w}\beta ^{\nu _{v}+\nu _{w}}e^{\beta I_{v}}e^{\beta
I_{w}}b_{\hat{\imath}\hat{\jmath}}=\pi _{v^{\prime }}\pi _{w^{\prime
}}\beta ^{\nu _{v^{\prime }}+\nu _{w^{\prime }}}e^{\beta I_{v^{\prime
}}}e^{\beta I_{w^{\prime }}}b_{\hat{\jmath}\hat{\imath}},
\end{equation*}
and up to a common factor, the solution of this system has the form
\begin{equation}
\pi _{v}=p_{v}e^{-\beta I_{v}}\beta ^{-\nu _{v}}  \label{okoshko}
\end{equation}

\subsubsection{Unary reactions included}
Let $V=\bigcup _{\alpha }V_{\alpha }$ be a disjoint union of sets
$V_{\alpha }$ of isomers. Thus, we assume that unary reactions
$v\rightarrow w$ are allowed only if $v$ and $w$ belong to the same
$V_{\alpha }$. The energy dependence of unary reactions will be
defined in the same way as in section \ref{ENERGY}, but additionally
we take $\nu _{v}$ to be constant on each $V_{\alpha }$, in other
words $\nu _{v}=\nu _{w}$ for any two isomers $v,w\in V_{\alpha }$. 

\medskip\noindent
We consider the same binary reactions as in section \ref{BINFAC}, with
the assumption that they are concordant with unary reactions in the
following sense: $ p_{(v,w)}=p_{v}p_{w}$ are such that, for any
$\alpha $, the probabilities $p_{v}$ have the form given in section
\ref{ENERGY} up to a constant factor.

\begin{theo}
  If the previous conditions are fulfilled, then  formula
  (\ref{okoshko}) gives the factorized reversible invariant
  distribution, both for binary and unary reactions.
\end{theo}
\begin{proof} It suffices to compare the formulae (\ref{okoshko}) and
(\ref{diez}), remarking that the factor $\Gamma (\nu _{v})$ in
(\ref{diez}) can be omitted, since $\nu _{v}$ is constant on
$V_{\alpha }$.
\end{proof}


\add{Guy Fayolle}
{INRIA Rocquencourt -- Domaine de Voluceau BP 105\\
78153 Le Chesnay, France. \quad Guy.Fayolle@inria.fr}
\medskip
\add{Vadim Malyshev}
{INRIA Rocquencourt -- Domaine de Voluceau BP 105\\
78153 Le Chesnay, France. \quad Vadim.Malyshev@inria.fr}
\medskip
\add{Serguei Pirogov} {IPPI -- Russian Academy of Sciences 
\\19 Bolshoi Karetny -- 101447 Moscow, Russia.\\ Work partially
supported by RFBR grant 02-01-01276.}


\begin{thebibliography}{9}
\bibitem{CaCaLo} {\sc E.~Carlen, M.~Carvalho and M.~Loss}, {\em
Determination of the spectral gap for Kac's master equation and
related stochastic evolutions} (2001), Preprint.

\bibitem{Ern} {\sc M.~Ernst}, In {\em Nonequilibrium Phenomena I. The
Boltzmann Equation}, North Holland, 1983.

\bibitem{Kac} {\sc M.~Kac}, {\em Probability and Related Topics in
Physical Sciences}, Interscience Publishers, 1958.

\bibitem{Kal} {\sc A.~Kalinkin} {\em Markov branching processes with
interaction}, Russian Math. Reviews, vol.~57, No. 2 (2002),
pp.~23--84.

\bibitem{Leo} {\sc M.~A.~Leontovich}, {\em Main equations of kinetical
  theory of gases from the random processes point of view}, J. of
  Experim. and Theor. Physics, vol.~5, No.~3--4 (1935), pp.~211--231.

\bibitem{MaPiRy} {\sc V.~Malyshev, S.~Pirogov and A.~Rybko}, {\em
Random walks and chemical networks}, to appear in Moscow Math. J.

\bibitem{McQua} {\sc D.~McQuarrie}, {\em Stochastic approach to
chemical kinetics},  J. Appl. Prob., 4 (1967), pp.~413--478.

\bibitem{Whi} {\sc P.~Whittle}, {\em Systems in Stochastic
  Equilibrium}, John Wiley. 1986.
\end{thebibliography}
\end{document}